\numberwithin{equation}{section}
\theoremstyle{plain}
\newtheorem{thm}{Theorem}[section]
\newtheorem{lem}[thm]{Lemma}
\newtheorem{prop}[thm]{Proposition}
\theoremstyle{definition}
\newtheorem{defn}[thm]{Definition}
\newtheorem{ex}[thm]{Example}
\newcommand{\abs}[1]{\left\vert#1\right\vert}
\newcommand{\set}[1]{\left\{#1\right\}}
\newcommand{\squ}[1]{\left[#1\right]}
\newcommand{\bra}[1]{(#1)} 
\newcommand{\norm}[1]{\left\Vert#1\right\Vert}
\newcommand{\prob}[1]{\operatorname{\mathbb{P}}(#1)}
\newcommand{\Ex}[1]{\operatorname{\mathbb{E}}\left[#1\right]}
\newcommand{\floor}[1]{\left\lfloor {#1} \right\rfloor}
\newcommand{\indev}[1]{\mathbf{1}_{\squ{#1}}}
\newcommand{\hyp}{\ensuremath{\mathbb{Z}_2^n}}
\newcommand{\RR}{\mathbb R}
\newcommand{\CC}{\mathbb C}
\newcommand{\distname}[1]{\mbox{\textit{#1}}}
\newcommand{\expdist}{\distname{Exp}}
\newcommand{\rinf}{\rightarrow\infty}
\newcommand{\rminf}{\rightarrow -\infty}
\newcommand{\rinz}{\rightarrow 0}
\newcommand{\nrinf}{\nrightarrow\infty}
\newcommand{\Ord}{\ensuremath{O}} 
\DeclareMathOperator{\separation}{sep}
\newcommand{\sept}[1]{\ensuremath{\separation \, (#1)}}
\newcommand{\septi}[1]{\ensuremath{\separation^i_n\, (#1)}}
\newcommand{\septn}[1]{\ensuremath{\separation_n\, (#1)}} 
\DeclareMathOperator*{\simlim}{\sim}
\newcommand{\ea}{\emph{et al.}}
\newcommand{\Xn}{\ensuremath{X_n}} 
\newcommand{\xn}{\ensuremath{x_n}} 
\newcommand{\Yn}{\ensuremath{Y_n}} 
\newcommand{\Tn}{\ensuremath{T_n}} 
\newcommand{\En}{\ensuremath{E_n}} 
\newcommand{\pin}{\ensuremath{\pi_n}} 
\newcommand{\lin}{\ensuremath{\lambda_n^i}} 
\begin{document}


\title{Separation and coupling cutoffs for tuples of independent Markov processes}

\author{Stephen Connor \\{\texttt sbc502@york.ac.uk} \\ \phantom{space} \\Department of Mathematics\\University of York\\
York YO10 5DD, UK}

\date{}

\maketitle

\begin{abstract}
\, We consider an $n$-tuple of independent ergodic Markov processes, each of which converges (in the sense of separation distance) at an exponential rate, and obtain a necessary and sufficient condition for the $n$-tuple to exhibit a separation cutoff. We also provide general bounds on the (asymmetric) window size of the cutoff, and indicate links to classical extreme value theory.
\end{abstract}


\section{Introduction}\label{sec:intro}

It is well known that a large number of Markov chains exhibit cutoff phenomena when converging to stationarity. This phenomenon occurs when the distance of the chain from equilibrium (measured using, for example, the total-variation metric or separation distance) stays close to its maximum value for some time, before dropping relatively fast and tending quickly to zero. Such behaviour was first identified for the transposition shuffle on the symmetric group \cite{Diaconis.Shahshahani-1981}, and has since been shown to hold for many natural sequences of random walks on groups (see
\cite{Saloff-Coste-2004} for a review).

In a recent paper~\cite{Barrera.Lachaud.ea-2006}, Barrera \ea\ consider $n$-tuples of independent processes, and give sufficient conditions for cutoffs to hold when distance from stationarity is measured using total-variation, Hellinger, chi-square and Kullback distances, under the assumption that each coordinate process converges exponentially fast. In the particular case when all coordinates converge at the same rate, the window size of the cutoff (to be defined below) is also determined.

In this paper we consider the separation distance of such $n$-tuples from stationarity and give conditions (very similar to those in \cite{Barrera.Lachaud.ea-2006}) guaranteeing the existence of a separation cutoff. Our approach is slightly different from that of Barrera \ea, however: instead of working with a set of ordered exponential rates we choose to work with discrete probability measures. This enables us to relate cutoff to convergence of (suitably scaled versions of) these measures. Furthermore, we are able to provide general bounds on the window size of the cutoff (not only when all coordinates converge at the same rate). In particular, we show that in general the right-hand side of the cutoff window may be of significantly larger order than the left.

The paper is organised as follows. In Section~\ref{sec:cutoff-phenomenon} we recall the definitions of total-variation and separation distance, and make formal the notion of cutoff time and window size.
In Section~\ref{sec:sep-cutoff} we present our main result concerning the existence of a separation cutoff, and prove general bounds on the window-size of such a cutoff. We then apply this to the example of a continuous-time random walk on the hypercube \hyp, where each coordinate may move at a different rate, and present a specific case which shows that our general window-size bounds are tight. Some links to classical extreme value theory are also highlighted. Finally, in Section~\ref{sec:coupling cutoff}, we briefly consider the notion of a \emph{coupling cutoff} for two such $n$-tuples.


\smallskip

\section{The cutoff phenomenon}\label{sec:cutoff-phenomenon}

In keeping with the notation of \cite{Diaconis.Saloff-Coste-2006}, for two probability measures $\mu$ and $\nu$ on a finite space $(E,\mathcal{E})$ we shall write $D(\mu,\nu)$ for a general notion of distance between them. One example is \emph{total-variation distance}
\[ D(\mu,\nu) = \norm{\mu - \nu}_{\textup{TV}} = \sup_{A\in\mathcal{E}} \abs{\mu(A) - \nu(A)} \,, \]
while the \emph{separation distance} is defined to be
\[ D(\mu,\nu) = \sept{\mu,\nu} = \max_{x\in E}\set{1- \frac{\mu(x)}{\nu(x)}} \,. \]
Note that separation is not a metric due to its asymmetry. Both of these distances take values in $[0,1]$, and it is simple to show \cite{Aldous.Diaconis-1987} that
\[ \norm{\mu - \nu}_{\textup{TV}} \leq \sept{\mu, \nu} \,. \] 

Separation distance is intimately linked with the notion of strong stationary times. Let $X$ be a Markov chain with time-$t$ distribution $P^t$ and stationary distribution $\pi$. 

\begin{defn}\label{defn:SST}
A \emph{strong stationary time (SST)} $T$ is a randomized stopping time for $X$ such that
\[ \prob{X_t = k\,|\, T\leq t} = \pi(k) \,, \qquad\text{for all $0\leq t<\infty$, $k\in E$.} \]
\end{defn}
If $T$ is a SST for $X$, then~\cite{Aldous.Diaconis-1987}
\begin{equation}\label{eqn:SST-inequality}
\sept{P^t, \pi} \leq \prob{T>t}\,, \qquad \text{for all $t\geq 0$.} 
\end{equation}
An optimal SST is one which achieves equality in (\ref{eqn:SST-inequality}) for all $t\geq 0$: existence is demonstrated in~\cite{Aldous.Diaconis-1987} (in discrete-time).

\medskip

We may now define the notion of a cutoff phenomenon for a given distance function $D$ (including, but not restricted to, those distances defined above).

\begin{defn}\label{defn:cutoff}
For $n\geq 1$, let $\Xn$ be a stochastic process taking values on a finite space $(\En,\mathcal{E}_n)$, with time-$t$ distribution $P_n^t$ and stationary distribution $\pin$. We say that the sequence
$\set{\En, \Xn, \pin \,;\, n=1,2,\dots}$ exhibits a \emph{$(\tau_n,b_n)$-$D$-cutoff} if $\tau_n,b_n>0$ satisfy
$b_n=o(\tau_n)$ and
\begin{align*}
d_-(c) = \liminf_{n\rinf} D\bra{P_n^{\tau_n+cb_n},\pi_n}
\quad&\text{satisfies $\lim_{c\rminf}d_-(c) = 1$} \,, \\
d_+(c) = \limsup_{n\rinf} D\bra{P_n^{\tau_n+cb_n},\pi_n}
\quad&\text{satisfies $\lim_{c\rinf}d_+(c) = 0$}\,.
\end{align*}
\end{defn}

Here $\tau_n$ is called the \emph{cutoff time}, and $b_n$ will be referred
to as the \emph{window} of the cutoff. (We may simply say that the sequence $\Xn$ exhibits a $\tau_n$-$D$-cutoff when we are not concerned with the window size $b_n$.) 

Furthermore, it is possible to analyse the window size in more detail by
considering separately the windows either side of the cutoff time
$\tau_n$. That is, instead of using a single sequence $b_n$ to
establish convergence in equations~\eqref{eqn:F-inf-conv} and
\eqref{eqn:F-sup-conv}, we can consider each convergence statement
separately.
\begin{defn}\label{defn:left/right-windows}
Suppose the sequence $\set{\En, \Xn, \pin}$ exhibits a
$\tau_n$-$D$-cutoff. If there exist sequences $b_n^L$, $b_n^R$
with $\max\{b_n^L,b_n^R\}=o(\tau_n)$, such that %
\begin{align*}
d^L_-(c) &= \liminf_{n\rinf} D\bra{P_n^{\tau_n+cb_n^L},\pin}
\quad\text{satisfies $\lim_{c\rminf}d^L_-(c) = 1$,} \\
\text{and}\quad d^R_+(c) &= \limsup_{n\rinf} D\bra{P_n^{\tau_n+cb_n^R},\pin}
\quad\text{satisfies $\lim_{c\rinf}d^R_+(c) = 0$,}
\end{align*}
then $b_n^L$ will be called a \emph{left-window} and $b_n^R$ a \emph{right-window} of the cutoff.
\end{defn}

To the best of the author's knowledge, the only published article to identify a difference between the left and right windows of a cutoff phenomenon is
~\cite{Chen2008}. For the processes considered in this paper however, such a distinction will prove to be rather important.



\section{Separation cutoff for $n$-tuples of independent processes}\label{sec:sep-cutoff}

Let $\Xn = (X_n^1,X_n^2,\dots,X_n^n)$ be an $n$-tuple of independent, continuous-time Markov chains on a finite space $(E_n,\mathcal{E}_n)$, with initial state $x_n = (x_n^1,\dots,x_n^n)$ and stationary distribution $\pin=\pin^1\times\ldots\times\pin^n$. Let 
\[ \septn{t} = \sept{P_n^t\,,\pin} \quad\text{and}\quad \septi{t} = \sept{P_{n,i}^t\,, \pin^i} \,, \]
where $P_{n,i}^t$ denotes the distribution of $\Xn^i$ at time $t$.

\begin{prop}\label{prop:from_one_to_many}
For all $t\geq 0$, 
\[ \septn t = 1- \prod_{i=1}^n (1-\septi t) \,. \]
\end{prop}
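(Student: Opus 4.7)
The plan is to expand both sides using the definition of separation distance and the product structure of $P_n^t$ and $\pi_n$. By independence of the coordinate processes we have $P_n^t(x) = \prod_{i=1}^n P_{n,i}^t(x^i)$ for any $x = (x^1,\dots,x^n) \in E_n$, and similarly $\pi_n(x) = \prod_{i=1}^n \pi_n^i(x^i)$. Substituting into the definition of separation gives
\[ \septn t = \max_{x \in E_n}\set{1 - \prod_{i=1}^n \frac{P_{n,i}^t(x^i)}{\pi_n^i(x^i)}} = 1 - \min_{x\in E_n} \prod_{i=1}^n \frac{P_{n,i}^t(x^i)}{\pi_n^i(x^i)}. \]

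The key step is then to observe that, because the variables $x^1,\dots,x^n$ can be chosen independently, the minimum over $x \in E_n$ of the product factorises into a product of minima over each coordinate:
\[ \min_{x\in E_n} \prod_{i=1}^n \frac{P_{n,i}^t(x^i)}{\pi_n^i(x^i)} = \prod_{i=1}^n \min_{x^i\in E_n^i}\frac{P_{n,i}^t(x^i)}{\pi_n^i(x^i)}. \]
This uses nonnegativity of each ratio together with the fact that the domain of optimisation is a Cartesian product. Finally, for each coordinate $i$, the definition of separation gives
\[ \min_{x^i\in E_n^i}\frac{P_{n,i}^t(x^i)}{\pi_n^i(x^i)} = 1 - \septi t, \]
and substituting yields the claimed identity.

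The only subtle point is the factorisation of the minimum, which is immediate here but worth stating carefully. No further obstacles arise: the argument is essentially a direct calculation from the definitions, relying only on independence of the coordinates and the product form of the stationary distribution.
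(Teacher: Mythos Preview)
Your argument is correct and essentially identical to the paper's: both expand $1-\septn t$ as a minimum of a product of ratios using independence, then factorise the minimum over the Cartesian product into a product of coordinate-wise minima. The paper compresses this into one displayed line with the remark ``each term in the product may be minimised individually,'' which is exactly the step you spell out more carefully.
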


\begin{proof}
The independence of the chains implies that
\[ 1-\septn t = \min_{y_n^1,\dots,y_n^n} \prod_{i=1}^n \frac{P_{n,i}^t(x_n^i,y_n^i)}{\pin^i(y_n^i)}  = \prod_{i=1}^n (1-\septi t)\,, \]
since each term in the product may be minimised individually.
\end{proof}

If $T_n^i$ is an optimal SST for $\Xn^i$ ($1\leq i\leq n$), then letting $T_n = \max T_n^i$ one can check that $T_n$ is a SST for the $n$-tuple. Proposition~\ref{prop:from_one_to_many} shows that
\[ \septn t = \prob{T_n >t} \quad \text{for all $t\geq 0$,} \]
and it follows that $T_n$ is an optimal SST for $\Xn$.


\medskip
As in~\cite{Barrera.Lachaud.ea-2006}, we are interested in processes for which each component $X_n^i$ converges at an exponential rate $\lambda_n^i$, although now this convergence is to be measured using separation distance. Rather than following the route of~\cite{Barrera.Lachaud.ea-2006} and using an ordered set of rates $\{\lambda_{(i,n)}\}$, we prefer to work instead with discrete probability measures $\mu_n$ on $(0,\infty)$, where 
\[ \mu_n(\set{\lambda}) = \frac{1}{n}\,\#\{\lambda_n^i \,:\, \lambda_n^i = \lambda\} \,. \]
(This is similar to the use of \emph{design measures} in design theory, see e.g. \cite{St.John.Draper-1975}.) The
result of this will be that the existence of a separation cutoff can be
directly related to the convergence of appropriately scaled versions
of $\mu_n$ as $n\rinf$. We define $\kappa_n$ by
\[ \kappa_n = \min\set{\lambda>0\,:\, \mu_n(0,\lambda]>0} \,. \]
The main result of this paper is the following:

\begin{thm}\label{thm:main-prelim}
Let $\Xn$ be an $n$-tuple of independent ergodic Markov processes, each of whose components satisfies $|g_{\lin}(t)|\leq g(t)$ for all $t\geq0$, where $g_{\lin}$ is defined by
    \[ \frac{\log \septi t}{t} + \lin = g_{\lin}(t) \,, \]
and where $g$ is a bounded continuous function satisfying $g(t)\leq \Ord(t^{-1})$. As above, let $\mu_n$ be the discrete probability measure describing the set $\{\lin\}$, with support $[\kappa_n,\infty)$.
\begin{enumerate}
\item The sequence of $n$-tuples $\Xn$ exhibits a separation cutoff at time
        \[ \tau_n = \max_{\lambda\geq \kappa_n} \set{\frac{\log
(n\mu_n(0,\lambda])}{\lambda}}
 \]
if and only if $\tau_n\kappa_n\rinf$;
\item The window of the separation cutoff is in general asymmetric: the left side is at most $\Ord(1/\kappa_n)$, and the right side is bounded above by $W(\tau_n\kappa_n)/\kappa_n$, where $W$ is the Lambert $W$-function.
\end{enumerate}
\end{thm}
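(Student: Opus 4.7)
The overall strategy is to reduce the behaviour of $\septn{t}$ to that of the Laplace transform of $\mu_n$,
\[ I_n(t) := \sum_{i=1}^n e^{-\lin t} = n\int_0^\infty e^{-\lambda t}\,d\mu_n(\lambda). \]
Proposition~\ref{prop:from_one_to_many} together with the elementary inequalities $1-\prod_i(1-x_i)\leq \sum_i x_i$ and $1-\prod_i(1-x_i)\geq 1-e^{-\sum_i x_i}$ gives the sandwich
\[ 1 - e^{-\sum_i \septi{t}} \leq \septn{t} \leq \sum_i \septi{t}. \]
The hypothesis $|g_{\lin}(t)|\leq g(t)=\Ord(t^{-1})$ implies that $|\log\septi{t}+\lin t|\leq M$ uniformly for some constant $M$, so $e^{-M}I_n(t)\leq \sum_i\septi{t}\leq e^{M}I_n(t)$. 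Hence $\septn{t}\rinz\Leftrightarrow I_n(t)\rinz$ and $\septn{t}\to 1 \Leftrightarrow I_n(t)\rinf$, reducing the cutoff question to a deterministic analysis of $I_n$.

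Next, I bound $I_n$ near $\tau_n$ using the defining inequality $n\mu_n(0,\lambda]\leq e^{\lambda\tau_n}$ for every $\lambda\geq\kappa_n$, with equality at some maximiser $\lambda^*$. For a \emph{lower} bound, I restrict the sum to rates $\leq\lambda^*$ and use $e^{-\lambda(\tau_n-s)}\geq e^{-\lambda^*(\tau_n-s)}$ (for $s\geq 0$):
\[ I_n(\tau_n-s)\geq n\mu_n(0,\lambda^*]\,e^{-\lambda^*(\tau_n-s)} = e^{\lambda^* s}\geq e^{\kappa_n s}. \]
For an \emph{upper} bound, integration by parts in $I_n(t)=nt\int_0^\infty e^{-\lambda t}F_n(\lambda)\,d\lambda$ (with $F_n$ the c.d.f.\ of $\mu_n$) together with $F_n(\lambda)\leq e^{\lambda\tau_n}/n$ gives
\[ I_n(\tau_n+s)\leq (\tau_n+s)\int_{\kappa_n}^\infty e^{-\lambda s}\,d\lambda = \left(1+\frac{\tau_n}{s}\right)e^{-\kappa_n s}. \]

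These two bounds translate immediately into the claimed windows. The lower bound shows $I_n(\tau_n-c/\kappa_n)\geq e^c\rinf$, so $1/\kappa_n$ is a valid left window whenever $\tau_n\kappa_n\rinf$. For the right window, solving $(1+\tau_n/s)e^{-\kappa_n s}\rinz$ reduces to the transcendental relation $\kappa_n s\cdot e^{\kappa_n s}\asymp\kappa_n\tau_n$, i.e.\ $\kappa_n s=W(\kappa_n\tau_n)$; choosing $s=cW(\kappa_n\tau_n)/\kappa_n$ with $c>1$ and using $e^{-W(x)}=W(x)/x$ collapses the upper bound to $(2/c)\bigl(W(\kappa_n\tau_n)/(\kappa_n\tau_n)\bigr)^{c-1}\rinz$, proving the right-window bound in (2) and the ``if'' half of (1). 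For the converse, if $\tau_n\kappa_n$ does not diverge then along some subsequence $\tau_n\kappa_n\leq C$; taking any coordinate with rate $\kappa_n$, the componentwise inequality $\septn{t}\geq\septi{t}\geq e^{-M}e^{-\kappa_n t}$ yields $\septn{\tau_n+cb_n}\geq e^{-M-C-c\kappa_n b_n}$, and since any $b_n=o(\tau_n)$ forces $\kappa_n b_n=o(\tau_n\kappa_n)=o(1)$ along the subsequence, this lower bound stays bounded away from zero, contradicting $d_+(c)\rinz$.

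The main delicate point is the Lambert~$W$ prefactor: the polynomial term $1+\tau_n/s$ in the upper bound on $I_n(\tau_n+s)$ forces one to take a constant $c>1$ in $s=cW(\kappa_n\tau_n)/\kappa_n$ (the borderline $c=1$ gives only boundedness), which is precisely why the right window is genuinely $W(\tau_n\kappa_n)/\kappa_n$ rather than the superficially similar $\log(\tau_n\kappa_n)/\kappa_n$. Everything else is a routine reduction via the product formula and the $g$-hypothesis.
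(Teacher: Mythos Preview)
Your proof is correct and follows essentially the paper's own route: your $I_n$ coincides with the paper's auxiliary function $\theta_n$ (once the rescaling to $\nu_n$ is unwound), your key inequality $n\mu_n(0,\lambda]\le e^{\lambda\tau_n}$ is the paper's bound $\nu_n(0,x]\le\beta_n^{x-1}$, and the integration-by-parts producing $(1+\tau_n/s)e^{-\kappa_n s}$ together with the Lambert-$W$ analysis matches Theorem~\ref{thm:coupling cutoff-window} line for line, as does the left-window lower bound via restriction to $(0,\lambda^*]$ (Lemma~\ref{lem:coupling cutoff-window}) and the necessity argument (Proposition~\ref{prop:nec-condition}). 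The only difference is packaging: the paper routes everything through the rescaled measure $\nu_n$ and records the side observation $\nu_n(0,1]\xrightarrow{w}\delta_1$, whereas you work directly with $\mu_n$ and use the slightly simpler sandwich $1-e^{-\sum_i\septi{t}}\le\septn{t}\le\sum_i\septi{t}$ in place of Proposition~\ref{prop:theta-behaviour}.
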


As remarked in \cite{Barrera.Lachaud.ea-2006}, under the conditions of Theorem~\ref{thm:main-prelim} the spectral gap of $\Xn$ is equal to $\kappa_n$ and the separation-mixing time equivalent to $\tau_n$. Thus Theorem~\ref{thm:main-prelim}(i) shows that the conjecture of Peres (reported in \cite{Diaconis.Saloff-Coste-2006, Chen2008}) holds true for separation cutoff for the processes considered here.

\medskip
Consider an $n$-tuple $\Xn$ satisfying the conditions of Theorem~\ref{thm:main-prelim}. Using $\mu_n$ and Proposition~\ref{prop:from_one_to_many}, the separation distance at time $t$ may be written as
\begin{equation}\label{eqn:sep-measure}
\septn{t} = 1-\exp \left(n \int_{\kappa_n}^\infty
\log\bra{1-e^{-t(\lambda - g_\lambda(t))}}\mu_n(d\lambda)\right) \,.
\end{equation}
One benefit of working with separation distance in this setting is that equation~\eqref{eqn:sep-measure} holds for any $\mu_n$, whereas there is no longer a simple exact expression for the total-variation distance between $P_n^t$ and $\pin$ when the rates $\lambda_n^i$ are not identical \cite{Barrera.Lachaud.ea-2006}.

The proof of Theorem~\ref{thm:main-prelim} will be established by the results of Proposition~\ref{prop:nec-condition}, Lemma~\ref{lem:coupling cutoff-window} and Theorem~\ref{thm:coupling cutoff-window} below.

\begin{prop}\label{prop:nec-condition}
For the sequence $\{\Xn\}$ to exhibit a $\tau_n$-separation cutoff, it is necessary for $\tau_n\kappa_n\rinf$. 
\end{prop}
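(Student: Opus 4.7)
The plan is to argue by contrapositive: assume $\tau_n\kappa_n\nrinf$ and show that no sequence $b_n=o(\tau_n)$ can serve as a right-window, by producing a strictly positive lower bound on $\septn{\tau_n+cb_n}$ that persists as $c\rinf$. After extracting a subsequence (which I shall not re-index), suppose there exists $M<\infty$ with $\tau_n\kappa_n\leq M$ for every $n$, and for each $n$ pick $i_0=i_0(n)$ with $\lambda_n^{i_0}=\kappa_n$, so that $i_0$ indexes the slowest-converging coordinate.

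The key quantitative input is a uniform bound on $t\,g_{\lin}(t)$. The hypotheses on $g$ (bounded continuous, $g(t)\leq\Ord(1/t)$) together with $|g_{\lin}(t)|\leq g(t)$ furnish a constant $K$ such that $|t\,g_{\lin}(t)|\leq K$ for all $n,i$ and all $t\geq 0$: boundedness of $g$ controls $t\,g(t)$ on any compact $t$-interval, while the $\Ord(1/t)$ tail controls it for large $t$. Rearranging the defining relation of $g_{\lin}$ then yields
\[ \septi{t}=\exp\bra{-t\lin+t\,g_{\lin}(t)}\geq\exp\bra{-t\lin-K}. \]

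From Proposition~\ref{prop:from_one_to_many}, the $n$-tuple's separation dominates any one coordinate's: since each factor in $\prod_{i=1}^n\bra{1-\septi{t}}$ lies in $[0,1]$,
\[ \septn{t}=1-\prod_{i=1}^n\bra{1-\septi{t}}\geq \separation_n^{i_0}(t)\geq e^{-t\kappa_n-K}. \]
Fix any $c>0$. From $b_n=o(\tau_n)$ and $\tau_n\kappa_n\leq M$ it follows that $cb_n\kappa_n=c(b_n/\tau_n)(\tau_n\kappa_n)\rinz$, so $(\tau_n+cb_n)\kappa_n\leq M+1$ for all large $n$ in the subsequence, whence
\[ \septn{\tau_n+cb_n}\geq\exp\bra{-(M+1)-K}=:\epsilon>0. \]
Hence $d_+(c)\geq\epsilon$ for every $c$, contradicting $\lim_{c\rinf}d_+(c)=0$.

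The main (and essentially only) delicate point is the uniform bound on $t\,g_{\lin}(t)$; the rest of the argument simply exploits that the slowest coordinate on its own already bounds the $n$-tuple's separation from below, and cannot have mixed by time $\tau_n+cb_n$ while $\tau_n\kappa_n$ stays bounded.
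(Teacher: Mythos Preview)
Your argument is correct and follows essentially the same route as the paper: both isolate the slowest coordinate $i_0$ with $\lambda_n^{i_0}=\kappa_n$, use Proposition~\ref{prop:from_one_to_many} to get $\septn{t}\geq\separation_n^{i_0}(t)=e^{-t\kappa_n+tg_{\kappa_n}(t)}\geq e^{-t\kappa_n-K}$, and then observe that this lower bound cannot vanish when $\tau_n\kappa_n$ stays bounded. The only cosmetic difference is that the paper evaluates at $t=c\tau_n$ for fixed $c>1$, whereas you work directly with $t=\tau_n+cb_n$ for an arbitrary candidate window $b_n=o(\tau_n)$ and pass to a subsequence; your formulation is arguably cleaner with respect to Definition~\ref{defn:cutoff}, but the underlying estimate is identical.
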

\begin{proof}
Restricting attention to the mass at $\kappa_n$ in equation~\eqref{eqn:sep-measure} immediately implies that, for any $c>1$,
\begin{align*}
\septn{c\tau_n} &\geq \exp\bra{-c\tau_n(\kappa_n-g_{\kappa_n}(c\tau_n))} \\
& \geq \exp\bra{-c\tau_n\kappa_n}\exp\bra{-c\tau_n g(c\tau_n)} \,.
\end{align*}
For a separation cutoff to hold at $\tau_n$, we require that $\septn{c\tau_n}\rinz$ for all fixed $c>1$: this fails, however, if $\tau_n{\kappa_n}\nrinf$ (since the final exponential term above is bounded away from zero due to our conditions on $g$).
\end{proof}



\medskip

Now, given a measure $\mu_n$, define $\tau_n$ by
\begin{equation}\label{eqn:sep-cutoff-time}
\tau_n = \max_{\lambda\geq{\kappa_n}} \set{\frac{\log
(n\mu_n(0,\lambda])}{\lambda}} = \frac{\log
(n\mu_n(0,\lambda_n^*])}{\lambda_n^*} \,,
\end{equation}
where $\lambda_n^*\in[\kappa_n,\infty)$ is defined by this last equality.
(If there are two or more values of $\lambda$ achieving the maximum
in equation~\eqref{eqn:sep-cutoff-time} then we shall
(arbitrarily) always take $\lambda_n^*$ to be the minimum of these
values.)  Given $\lambda_n^*$, we may define a new measure $\nu_n$ on
$(0,\infty)$ as follows:
\begin{equation}\label{eqn:defn-v}
\nu_n (\{x\}) =
\frac{\mu_n(\{\lambda_n^*\,x\})}{\mu_n(0,\lambda_n^*]} \,.
\end{equation}
This measure has total mass
$\bra{\mu_n(0,\lambda_n^*]}^{-1}\in[1,n]$ and satisfies
$\nu_n(0,1] = 1$. The idea behind this scaling is as follows:
$\lambda_n^*$ describes in some sense the `critical point' of
$\mu_n$ -- it will be shown that if $\tau_n{\kappa_n}\rinf$ then any mass
$\mu_n$ places to the left of $\lambda_n^*$ will not influence the
separation cutoff time. For ease of notation we define
\[ \beta_n = n\mu_n(0,\lambda_n^*] \in[1,n] \,. \]

\smallskip

\begin{lem}\label{lem:v[0,1]->delta1}
If $\tau_n{\kappa_n}\rinf$ then:
\begin{enumerate}
\item[(i)] $\beta_n\rinf$;
\item[(ii)] $\nu_n(0,1]\xrightarrow{w}\delta_1$ (where $\xrightarrow{w}$ denotes weak convergence).
\end{enumerate}
\end{lem}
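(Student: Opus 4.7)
The plan is essentially definition-chasing, using the variational characterisation of $\lambda_n^*$ in~\eqref{eqn:sep-cutoff-time}. The fundamental identity is
\[ \tau_n \lambda_n^* = \log \beta_n, \]
obtained by reading off the definition of $\tau_n$ together with $\beta_n = n\mu_n(0,\lambda_n^*]$.

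For part~(i), I use only that $\lambda_n^* \geq \kappa_n$: this gives $\log \beta_n = \tau_n \lambda_n^* \geq \tau_n \kappa_n$, and the hypothesis $\tau_n \kappa_n \to \infty$ therefore forces $\beta_n \to \infty$.

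For part~(ii), the key observation is that the maximality of $\lambda_n^*$ provides a uniform upper bound on $\mu_n(0,\lambda]$ for every $\lambda \in [\kappa_n,\lambda_n^*]$: from $\log(n\mu_n(0,\lambda])/\lambda \leq \tau_n$ I obtain $\mu_n(0,\lambda] \leq e^{\lambda \tau_n}/n$. Substituting $\lambda = x\lambda_n^*$ and using $\tau_n\lambda_n^* = \log\beta_n$ gives
\[ \nu_n(0,x] \;=\; \frac{\mu_n(0,x\lambda_n^*]}{\mu_n(0,\lambda_n^*]} \;\leq\; \frac{e^{x\log\beta_n}/n}{\beta_n/n} \;=\; \beta_n^{\,x-1} \qquad \text{for } x \in [\kappa_n/\lambda_n^*,1], \]
while for $x < \kappa_n/\lambda_n^*$ we trivially have $\nu_n(0,x]=0$ by the very definition of $\kappa_n$. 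Combining these two cases with part~(i), $\nu_n(0,x] \to 0$ for every fixed $x<1$. Since $\nu_n(0,1]=1$ by construction, the restriction of $\nu_n$ to $(0,1]$ is a probability measure whose mass concentrates at $1$, which is precisely weak convergence to $\delta_1$.

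I do not anticipate any real obstacle here: the lemma follows from the definitions together with the fact that $\lambda_n^*$ achieves the maximum in~\eqref{eqn:sep-cutoff-time}. The only mild subtlety is the need to handle separately the range $x < \kappa_n/\lambda_n^*$ where the bound $\beta_n^{x-1}$ does not directly apply, but where the conclusion is trivial because $\mu_n$ charges nothing below $\kappa_n$.
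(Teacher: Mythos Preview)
Your proof is correct and follows essentially the same route as the paper: both use the identity $\log\beta_n=\tau_n\lambda_n^*\geq\tau_n\kappa_n$ for part~(i), and for part~(ii) both exploit the maximality in~\eqref{eqn:sep-cutoff-time} to obtain the bound $\nu_n(0,x]\leq\beta_n^{x-1}$, from which weak convergence to $\delta_1$ follows via part~(i). Your explicit handling of the range $x<\kappa_n/\lambda_n^*$ is a small clarification the paper leaves implicit.
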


\begin{proof} 
\begin{enumerate}
\item[(i)] $\beta_n=\exp(\tau_n\lambda_n^*) \geq \exp(\tau_n\kappa_n) \rinf$ by assumption.
\item[(ii)] By definition of $\tau_n$
\eqref{eqn:sep-cutoff-time},
\[ \frac{\log\bra{n\mu_n(0,\lambda]}}{\lambda} \leq
\frac{\log\beta_n}{\lambda_n^*} \quad\text{for all $\lambda\geq\kappa_n$.}
\]  Thus for all $x\geq \kappa_n/\lambda_n^*$,
\[ \frac{\log\bra{n\mu_n(0,x\lambda_n^*]}}{x} \leq \log\beta_n \,. \]
This yields
\begin{equation}\label{eqn:prop-proof1}
n\mu_n(0,x\lambda_n^*] \leq \beta_n^x \quad\text{for all $x\geq
\kappa_n/\lambda_n^*$.}
\end{equation}
Hence
\begin{equation}\label{eqn:bound-on-v}
\nu_n(0,x] = \frac{\mu_n(0,x\lambda_n^*]}{\mu_n(0,\lambda_n^*]} =
\frac{n\mu_n(0,x\lambda_n^*]}{\beta_n} \leq \beta_n^{x-1} \,,
\end{equation}
where the inequality follows from \eqref{eqn:prop-proof1}. Thus for
all $\varepsilon\in(0,1)$,
\[ \nu_n(0,1-\varepsilon] \leq \beta_n^{-\varepsilon}
\xrightarrow[n\rinf]{}0 \,. \] 
Since $\nu_n(0,1]=1$ for all $n$, this proves the required convergence.
\end{enumerate}
\end{proof}

This makes more precise what is meant by $\lambda_n^*$ describing
the `critical point' of $\mu_n$. Under the assumption that
$\tau_n{\kappa_n}\rinf$, the measures $\nu_n$ converge weakly to $\delta_1$ on $(0,1]$:
this is exactly the sort of behaviour to be expected if the sequence
$\set{\lambda_n^*}$ captures information about the cutoff
time. Lemma~\ref{lem:coupling cutoff-window} and Theorem~\ref{thm:coupling cutoff-window} make this observation exact: their proofs rely on Proposition~\ref{prop:theta-behaviour}, which describes the behaviour of the function $\theta_n$ defined by
\begin{equation}\label{eqn:defn-theta}
     \theta_n(t) = \beta_n \int_{{\kappa_n}/\lambda_n^*}^\infty
        \exp\bra{-t\lambda_n^* \lambda} \nu_n(d\lambda) \,.
\end{equation}

\smallskip

\begin{prop}\label{prop:theta-behaviour}
The following inequalities hold for all $t\geq \log2/\kappa_n$:
\begin{equation}\label{eqn:theta-bounds-F-2}
1-\exp\bra{-e^{-tg(t)}\theta_n(t)} \leq \septn{t} \leq
1-\exp\bra{-2e^{2tg(t)}\theta_n(t)} \,.
\end{equation}
\end{prop}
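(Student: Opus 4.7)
The plan is to rephrase the two-sided bound \eqref{eqn:theta-bounds-F-2} as a two-sided bound on the integral appearing inside the exponential of the explicit formula \eqref{eqn:sep-measure}, and then to compare that integral directly with $\theta_n(t)$. Writing
\[ I_n(t) \;=\; -n\int_{\kappa_n}^\infty \log\!\bigl(1 - e^{-t(\lambda - g_\lambda(t))}\bigr)\,\mu_n(d\lambda)\,, \]
the identity \eqref{eqn:sep-measure} reads $\septn{t} = 1 - e^{-I_n(t)}$, so since $x\mapsto 1-e^{-x}$ is increasing, \eqref{eqn:theta-bounds-F-2} is equivalent to the two-sided bound $e^{-tg(t)}\theta_n(t) \leq I_n(t) \leq 2e^{2tg(t)}\theta_n(t)$. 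A useful preliminary observation is that the change of variable $\lambda' = \lambda_n^*\lambda$ in \eqref{eqn:defn-theta}, combined with \eqref{eqn:defn-v} and $\beta_n = n\mu_n(0,\lambda_n^*]$, yields the scale-free expression $\theta_n(t) = n\int_{\kappa_n}^\infty e^{-t\lambda}\,\mu_n(d\lambda)$; this reduces the claim to a pointwise comparison of the integrands $-\log(1 - e^{-t(\lambda - g_\lambda(t))})$ and $e^{-t\lambda}$, each integrated against $n\mu_n(d\lambda)$ over $[\kappa_n,\infty)$.

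For the lower bound I would apply the elementary inequality $-\log(1-y)\geq y$ for $y\in[0,1)$, combined with $g_\lambda(t)\geq -g(t)$, which gives $e^{-t(\lambda - g_\lambda(t))} \geq e^{-tg(t)}e^{-t\lambda}$. Integrating against $n\mu_n(d\lambda)$ immediately yields $I_n(t) \geq e^{-tg(t)}\theta_n(t)$; this step uses neither the hypothesis $t\geq\log 2/\kappa_n$ nor anything beyond the pointwise bound on $g_\lambda$.

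For the upper bound, the starting point is the inequality $-\log(1-y)\leq 2y$, valid for $y\leq 1/2$. The hypothesis $t\geq \log 2/\kappa_n$ is precisely what gives $e^{-t\kappa_n}\leq 1/2$, which together with $g_\lambda(t)\leq g(t)$ yields
\[ e^{-t(\lambda - g_\lambda(t))} \;\leq\; e^{tg(t)}e^{-t\lambda} \;\leq\; \tfrac{1}{2}e^{tg(t)} \quad\text{for all }\lambda\geq\kappa_n\,. \]
The monotonicity of $y\mapsto -\log(1-y)/y$ on $[0,1)$ lets one pass from this uniform upper bound on $y$ to a pointwise estimate of the form $-\log(1-y)\leq 2e^{tg(t)}y$; substituting $y\leq e^{tg(t)}e^{-t\lambda}$ and integrating against $n\mu_n(d\lambda)$ then gives $I_n(t)\leq 2e^{2tg(t)}\theta_n(t)$.

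The main obstacle is the upper bound: near $\lambda=\kappa_n$ the argument of the logarithm can be close to $1$, so the integrand $-\log(1-e^{-t(\lambda-g_\lambda(t))})$ can be large, and care is needed to turn this into a clean constant in front of $\theta_n(t)$. The threshold $t\geq\log 2/\kappa_n$ is exactly what enforces the uniform estimate $y\leq \tfrac{1}{2}e^{tg(t)}$ on the domain of integration, and the extra factor of $e^{tg(t)}$ (producing $e^{2tg(t)}$ rather than $e^{tg(t)}$) is precisely the cost of absorbing the $g$-correction in the transfer from the bound on $y$ to the $e^{-t\lambda}$ that appears in $\theta_n(t)$.
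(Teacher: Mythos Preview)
Your approach is essentially the paper's: rewrite $\septn{t}=1-e^{-I_n(t)}$ and sandwich $I_n(t)$ between multiples of $\theta_n(t)$ using elementary bounds on $-\log(1-x)$. Your observation that $\theta_n(t)=n\int_{\kappa_n}^\infty e^{-t\lambda}\,\mu_n(d\lambda)$ is correct and makes the comparison transparent; the lower bound is identical to the paper's.

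The only substantive difference is in the upper bound. The paper uses the two-term inequality $-\log(1-x)\le x+x^2$ (for $x\le 1/2$), which after bounding $g_\lambda$ by $g$ produces $I_n(t)\le e^{tg(t)}\theta_n(t)+e^{2tg(t)}\theta_n(2t)$; it then invokes the trivial monotonicity $\theta_n(2t)\le\theta_n(t)$ to collapse this to $2e^{2tg(t)}\theta_n(t)$. Your route via $-\log(1-y)\le 2y$ and the monotonicity of $-\log(1-y)/y$ is morally the same, but the specific claim that monotonicity yields $-\log(1-y)\le 2e^{tg(t)}y$ on $\{y\le\tfrac12 e^{tg(t)}\}$ is not quite right: writing $a=e^{tg(t)}$, this would require $-\log(1-a/2)\le a^2$, which fails as $a\uparrow 2$ (e.g.\ at $a=1.98$ the left side is about $4.6$ and the right about $3.9$). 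So your argument gives \emph{some} bounded constant in front of $\theta_n(t)$ (enough for the downstream applications), but not the exact $2e^{2tg(t)}$ of the proposition. The paper's $x+x^2$ bound followed by $\theta_n(2t)\le\theta_n(t)$ is what delivers that constant cleanly.
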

Note that if $\tau_n\kappa_n\rinf$, Proposition~\ref{prop:theta-behaviour} implies that the behaviour of $\textup{sep}^{(n)}$ around $\tau_n$ is determined by that of $\theta_n$.
\begin{proof}
Using the measure $\nu_n$, the separation in equation~\eqref{eqn:sep-measure} may be rewritten as follows:
\begin{equation}\label{eqn:sep-using-nu}
\septn{t} = 1- \exp\left(\beta_n \int_{{\kappa_n}/\lambda_n^*}^\infty
\log\left(1-e^{-t(\lambda_n^*\lambda - g_{\lambda_n^*\lambda}(t))}\right)\nu_n(d\lambda)\right) \,.
\end{equation}
Now note that the following simple inequality holds for $0\leq x\leq 1/2$:
\begin{equation*}\label{eqn:basic-inequality}
-x-x^2 \leq \log(1-x) \leq -x \,.
\end{equation*}
Applying this inequality to the $\log$ term in equation~\eqref{eqn:sep-using-nu}, and bounding $g_{\lambda_n^*\lambda}(t)$ by $\pm g(t)$, shows that for all $t\geq \log2/\kappa_n$:
\begin{equation*}\label{eqn:theta-bounds-F-1}
1-\exp\bra{-e^{-tg(t)}\theta_n(t)}\leq \septn{t} \leq
1-\exp\bra{-e^{tg(t)}\theta_n(t) - e^{2tg(t)}\theta_n(2t)}  \,.
\end{equation*}
Finally, observe from \eqref{eqn:defn-theta} that $\theta_n(2t)\leq\theta_n(t)$ for all $t\geq 0$: the result follows immediately.
\end{proof}

We are now in a position to prove the existence of the left-hand side of the cutoff in Theorem~\ref{thm:main-prelim}.

\begin{lem}\label{lem:coupling cutoff-window}
Suppose that $\tau_n{\kappa_n}\rinf$, with $\tau_n$ defined as in (\ref{eqn:sep-cutoff-time}). Let $b_n^L=1/\lambda_n^*\leq \Ord(1/\kappa_n)$. Then
\[ \textup{sep}^L_-(c) = \liminf_{n\rinf}\, \septn{\tau_n+cb_n^L}
\quad\text{satisfies $\lim_{c\rminf}\textup{sep}^L_-(c) = 1$.} \]

\end{lem}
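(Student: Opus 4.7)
The plan is to apply the lower bound in Proposition~\ref{prop:theta-behaviour} at $t = \tau_n + c b_n^L$ with $c<0$, and to show that the quantity $\theta_n(t)$ appearing there blows up as $c\rminf$, uniformly in $n$. Together with the fact that $e^{-tg(t)}$ is bounded away from zero, this will force $\septn{t}$ to tend to $1$.

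The key ingredient is a one-line lower bound on $\theta_n$. Since $\nu_n(0,1]=1$ by construction, restricting the integral in~\eqref{eqn:defn-theta} to $[\kappa_n/\lambda_n^*,1]$ and using $e^{-t\lambda_n^*\lambda}\geq e^{-t\lambda_n^*}$ for $\lambda\in(0,1]$ gives
\[
\theta_n(t) \;\geq\; \beta_n e^{-t\lambda_n^*} \;=\; e^{(\tau_n-t)\lambda_n^*},
\]
where the equality uses the identity $\beta_n=e^{\tau_n\lambda_n^*}$ built into~\eqref{eqn:sep-cutoff-time}. Substituting $t=\tau_n + c/\lambda_n^*$ yields $\theta_n(\tau_n+cb_n^L)\geq e^{-c}$, which tends to $\infty$ as $c\rminf$.

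To handle the prefactor $e^{-tg(t)}$: the assumptions that $g$ is bounded and that $g(t)\leq \Ord(t^{-1})$ together imply that $tg(t)$ is bounded uniformly in $t\geq 0$, hence $e^{-tg(t)}\geq \gamma$ for some constant $\gamma>0$. To invoke Proposition~\ref{prop:theta-behaviour} one needs $t\geq\log 2/\kappa_n$, which holds eventually because $\lambda_n^*\geq\kappa_n$ gives
\[
(\tau_n + c b_n^L)\kappa_n \;\geq\; \tau_n\kappa_n - |c|\frac{\kappa_n}{\lambda_n^*} \;\geq\; \tau_n\kappa_n - |c| \;\rinf\;.
\]
Hence for each fixed $c<0$ and for $n$ sufficiently large,
\[
\septn{\tau_n+c b_n^L} \;\geq\; 1 - \exp\bra{-\gamma e^{-c}}.
\]
Taking $\liminf_{n\rinf}$ and then letting $c\rminf$ gives the desired conclusion, and the bound $b_n^L=1/\lambda_n^* \leq \Ord(1/\kappa_n)$ is immediate from $\lambda_n^*\geq\kappa_n$.

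I do not anticipate any substantial obstacle: the definition of $\tau_n$ makes the bookkeeping collapse cleanly to $\theta_n\geq e^{-c}$, and the remaining work is just verifying the domain of validity of Proposition~\ref{prop:theta-behaviour} and extracting a uniform bound on $tg(t)$ from the two hypotheses on $g$.
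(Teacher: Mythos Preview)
Your proof is correct and follows essentially the same route as the paper: restrict the integral defining $\theta_n$ to $(\kappa_n/\lambda_n^*,1]$, use $\nu_n(0,1]=1$ and $\beta_n=e^{\tau_n\lambda_n^*}$ to obtain $\theta_n(\tau_n+c/\lambda_n^*)\geq e^{-c}$, then feed this into the lower bound of Proposition~\ref{prop:theta-behaviour} together with the uniform bound on $tg(t)$. The only cosmetic differences are that you state the $\theta_n$ bound for general $t\geq 0$ before specializing, and you bound $tg(t)$ uniformly over all $t$ rather than asymptotically via $\gamma_n^L(c)\sim\tau_n g(\tau_n)$; note that the former implicitly uses $g\geq 0$ (which follows from $|g_\lambda|\leq g$) to get a two-sided bound.
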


(Note that since $\tau_n\kappa_n\rinf$, $b_n^L=o(\tau_n)$, as is required for any candidate window-size.)

\begin{proof}
Consider
$\theta_n(\tau_n+c/\lambda_n^*)$, for fixed $c\in\RR$. Since
$\tau_n{\kappa_n}\rinf$ it follows from Lemma~\ref{lem:v[0,1]->delta1}(i) that for any fixed $c\in\RR$, 
\[ \tau_n+\frac{c}{\lambda_n^*} = \frac{\log\beta_n+c}{\lambda_n^*}\geq 0 \]
for large enough $n$. By definition of $\tau_n$, with
$\tau_n+c/\lambda_n^*\geq 0$:
\begin{align}
 \theta_n(\tau_n+c/\lambda_n^*) &=
\beta_n \int_{{\kappa_n}/\lambda_n^*}^\infty
\exp\bra{-\squ{\tau_n+c/\lambda_n^*}\lambda_n^*\lambda}
\nu_n(d\lambda) \nonumber \\
&\geq \beta_n \int_{{\kappa_n}/\lambda_n^*}^1
\exp\bra{-\squ{\tau_n+c/\lambda_n^*}\lambda_n^*\lambda} \nu_n(d\lambda) \nonumber \\
&\geq \beta_n \nu_n(0,1] \left(\frac{e^{-c}}{\beta_n}\right) = e^{-c} \,.
\label{eqn:theta-lower-bound}
\end{align}

Combining Proposition~\ref{prop:theta-behaviour} and inequality~\eqref{eqn:theta-lower-bound} shows that for all $c\in\RR$,
\[ \textup{sep}^L_-(c) \geq 1-\limsup_{n\rinf}\,
\exp\bra{-e^{-\gamma_n^L(c)}\theta_n(\tau_n+c/\lambda_n^*)} \,, \] 
where  
\begin{equation}\label{eqn:gamma}
\gamma_n^L(c) = \bra{\tau_n+cb_n^L}g\bra{\tau_n+cb_n^L} \simlim_{n\rinf} \tau_n g(\tau_n)= \Ord(1)  \,.
\end{equation}
Hence
\[ \textup{sep}^L_-(c) \geq 1-\exp\bra{-Me^{-c}} \,, \]
for some finite constant $M>0$, and thus $\textup{sep}^L_-(c)\rightarrow 1$ as $c\rminf$, as claimed.
\end{proof}

It turns out that the general bound for the
right-window of the cutoff is significantly larger than
that for the left. Theorem~\ref{thm:coupling cutoff-window}, which completes the proof of Theorem~\ref{thm:main-prelim}, makes use of the Lambert
$W$-function (see \cite{Corless.Gonnet.ea-1996}). This is the function defined for all $x\in\CC$ by%
\[ W(x) e^{W(x)} = x \,. \]
$W(x)$ is positive and increasing for $x\in\RR^+$, with $\displaystyle{W(x) \sim \log (x/\log x)}$ as $x\rinf$.

\begin{thm}\label{thm:coupling cutoff-window}
Suppose that $\tau_n{\kappa_n}\rinf$, with $\tau_n$ defined as in (\ref{eqn:sep-cutoff-time}). Then
\[ \textup{sep}^R_+(c) = \limsup_{n\rinf}\, \septn{\tau_n+cW(\tau_n\kappa_n)/\kappa_n}
\quad\text{satisfies $\lim_{c\rinf}\textup{sep}^R_+(c) = 0$.} \]
\end{thm}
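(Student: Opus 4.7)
The plan is to apply Proposition~\ref{prop:theta-behaviour} and reduce everything to showing that $\theta_n(\tau_n+cb_n^R)\rinz$ as $n\rinf$, for any large enough $c>0$. Since $g(t)\leq \Ord(t^{-1})$ we have $tg(t)=\Ord(1)$, so $e^{2tg(t)}\leq C$ for some absolute constant $C$; combined with the elementary bound $1-e^{-x}\leq x$ and the fact that $\tau_n\kappa_n\rinf$ forces $\tau_n\geq\log 2/\kappa_n$ for large $n$, Proposition~\ref{prop:theta-behaviour} yields
\[ \septn{\tau_n+cb_n^R} \leq 2C\,\theta_n(\tau_n+cb_n^R) \]
for all sufficiently large $n$.

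The crucial analytic step is the upper bound
\[ \theta_n(t) \leq \frac{t}{t-\tau_n}\exp\bra{-\kappa_n(t-\tau_n)}, \qquad t>\tau_n. \]
I would prove this via Stieltjes integration by parts in \eqref{eqn:defn-theta}. Writing $F_n(\lambda)=\nu_n(0,\lambda]$ and $s=t\lambda_n^*$, and using that $\nu_n$ is supported in $[\kappa_n/\lambda_n^*,\infty)$ (so $F_n((\kappa_n/\lambda_n^*)-)=0$) with finite total mass $n/\beta_n$, one gets
\[ \theta_n(t) = \beta_n s\int_{\kappa_n/\lambda_n^*}^\infty e^{-s\lambda}\,F_n(\lambda-)\,d\lambda. \]
Inserting the uniform bound $\beta_n F_n(\lambda)\leq \beta_n^\lambda$ obtained in \eqref{eqn:bound-on-v} and carrying out the resulting elementary exponential integral produces the displayed inequality.

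The right-window is then identified by substituting $t=\tau_n+cb_n^R$ with $b_n^R=W(\tau_n\kappa_n)/\kappa_n$. The defining identity $W(x)e^{W(x)}=x$ rewrites the prefactor as $t/(t-\tau_n) = 1 + \tau_n\kappa_n/(cW(\tau_n\kappa_n)) = 1+e^{W(\tau_n\kappa_n)}/c$, while the exponent becomes $-cW(\tau_n\kappa_n)$. Hence
\[ \theta_n(\tau_n+cb_n^R) \leq e^{-cW(\tau_n\kappa_n)} + \frac{1}{c}\exp\bra{(1-c)W(\tau_n\kappa_n)}, \]
and since $W(\tau_n\kappa_n)\rinf$ (because $\tau_n\kappa_n\rinf$ and $W$ is increasing to infinity), each term vanishes as $n\rinf$ whenever $c>1$. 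This gives $\textup{sep}^R_+(c)=0$ for every $c>1$, and a fortiori $\lim_{c\rinf}\textup{sep}^R_+(c)=0$.

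The main conceptual point, rather than a technical obstacle, is why the Lambert $W$-function appears at all. The integration-by-parts estimate produces a prefactor of order $\tau_n/b_n^R$ competing with the exponential decay $e^{-\kappa_n c b_n^R}$; choosing $\kappa_n b_n^R=W(\tau_n\kappa_n)$ is precisely the balance that makes this prefactor equal to $e^{W(\tau_n\kappa_n)}$ via $x/W(x)=e^{W(x)}$, so that both factors are absorbed by $e^{-cW(\tau_n\kappa_n)}$ as soon as $c>1$. Any coarser scaling (e.g.\ $b_n^R\sim\log(\tau_n\kappa_n)/\kappa_n$) would leave the prefactor $\tau_n\kappa_n/\log(\tau_n\kappa_n)$ undominated for small $c$, which is exactly why the window on the right is a priori larger than the $\Ord(1/\kappa_n)$ window established on the left in Lemma~\ref{lem:coupling cutoff-window}.
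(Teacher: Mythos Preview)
Your proof is correct and follows essentially the same route as the paper: reduce to bounding $\theta_n$ via Proposition~\ref{prop:theta-behaviour}, integrate by parts, apply the bound $\nu_n(0,\lambda]\leq\beta_n^{\lambda-1}$ from \eqref{eqn:bound-on-v}, and then recognise $b_n^R=W(\tau_n\kappa_n)/\kappa_n$ as the critical scaling. Your cleaner intermediate inequality $\theta_n(t)\leq \frac{t}{t-\tau_n}e^{-\kappa_n(t-\tau_n)}$ is equivalent to the paper's bound \eqref{eqn:right-window-theta1} after substituting $t=\tau_n+cb_n^R$, and your use of $1-e^{-x}\leq x$ to pass from the separation bound to $\theta_n$ is a harmless cosmetic simplification of the paper's exponential form.
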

\begin{proof}
In order for a sequence $b_n^R$ to be a right-window for the separation cutoff,
it is sufficient to show that $\theta_n\bra{\tau_n+cb_n^R}\leq h(c)$
for sufficiently large $n$, where $h(c)\rinz$ as $c\rinf$. For then,
using inequality~\eqref{eqn:theta-bounds-F-2} it follows that
\begin{align*}
\textup{sep}^R_+(c) &= \limsup_{n\rinf}\, \septn{\tau_n+cb_n^R} \\
&\leq 1- \liminf_{n\rinf}\, \exp\bra{-2e^{2\gamma_n^R(c)}\theta_n\bra{\tau_n+cb_n^R}} \,,
\end{align*}
where $\gamma_n^R(c)$ is defined analogously to~(\ref{eqn:gamma}). Thus, for some finite $M$, 
\[ \textup{sep}^R_+(c) \leq 1- \exp\bra{-Mh(c)} \xrightarrow[c\rinf]{} 0\,. \]

We therefore search for an upper bound on the function
$\theta_n\bra{\tau_n+cb_n^R}$ for fixed $c>0$. The form of $\tau_n$, and use of integration by parts, yield the
following:
\begin{align}
\theta_n\bra{\tau_n+cb_n^R} &= \beta_n \int_{{\kappa_n}/\lambda_n^*}^\infty
\left(\frac{e^{-cb_n^R\lambda_n^*}}{\beta_n}\right)^\lambda
\nu_n(d\lambda) \nonumber \\
&= \beta_n\squ{\left(\frac{e^{-cb_n^R\lambda_n^*}}{\beta_n}\right)^\lambda
\nu_n(0,\lambda]}^\infty_{\kappa_n/\lambda_n^*}  \nonumber \\
 &\qquad + \beta_n \log(\beta_n e^{cb_n^R\lambda_n^*})
\int_{{\kappa_n}/\lambda_n^*}^\infty
\left(\frac{e^{-cb_n^R\lambda_n^*}}{\beta_n}\right)^\lambda
\nu_n(0,\lambda] \,d\lambda \,. \label{eqn:int-by-parts}
\end{align}
Now, for $c>0$, this first term is negative for all $n$. Discarding
this, and using inequality~\eqref{eqn:bound-on-v} to bound
$\nu_n(0,\lambda]$ in the second term, we see that
\begin{align}
\theta_n\bra{\tau_n+cb_n^R} &\leq \beta_n \log(\beta_n
e^{cb_n^R\lambda_n^*}) \int_{{\kappa_n}/\lambda_n^*}^\infty
\left(\frac{e^{-cb_n^R\lambda_n^*}}{\beta_n}\right)^\lambda
\beta_n^{\lambda-1} \,d\lambda \nonumber \\
 &= \log(\beta_n e^{cb_n^R\lambda_n^*})
 \frac{e^{-cb_n^R{\kappa_n}}}{cb_n^R\lambda_n^*} \nonumber \\
 &= e^{-cb_n^R{\kappa_n}}\left(\frac{\tau_n}{cb_n^R}+1\right) \,, \quad\text{by definition of $\tau_n$.} \label{eqn:right-window-theta1}
\end{align}
Since $b_n^R$ must satisfy $b_n^R=o(\tau_n)$, 
this upper bound tends
to infinity with $n$ unless $cb_n^R{\kappa_n}\geq W(\tau_n{\kappa_n})$, by definition of the Lambert $W$-function. Thus, with
$b_n^R=W(\tau_n{\kappa_n})/{\kappa_n}$, \eqref{eqn:right-window-theta1} satisfies
\[ e^{-cb_n^R\kappa_n}\left(\frac{\tau_n}{cb_n^R}+1\right) \xrightarrow[n\rinf]{} h(c) =
\begin{cases}
\infty &\quad 0<c<1 \\
1 &\quad c=1 \\
0 &\quad c>1 \,.
\end{cases}
\]
It follows that for $c> 1$, $\theta_n(\tau_n+cW(\tau_n\kappa_n)/\kappa_n)
\rinz$ as $n\rinf$, and so
\[ \textup{sep}^R_+(c) = \limsup_{n\rinf}\, \septn{\tau_n+cW(\tau_n\kappa_n)/\kappa_n} =0 \,. \]
Therefore $b_n^R=W(\tau_n\kappa_n)/\kappa_n$ is a right-window of the cutoff, as claimed.
\end{proof}

This bound on the right-window is significantly larger than that for
the left-window. Since $\tau_n\kappa_n$ necessarily tends to infinity when a
separation cutoff holds, it follows that 
\[ \Ord(1/\kappa_n) < \frac{W(\tau_n\kappa_n)}{\kappa_n} = o(\tau_n) \,. \]



\subsection{Random walks on \hyp}\label{ssec:hypercube}

Let \hyp\ be the group of binary $n$-tuples under coordinate-wise
addition modulo 2: this can be viewed as the vertices of an $n$-dimensional
hypercube. A continuous-time random walk $\Xn$ on \hyp\ may be defined as follows. Let $\{\Lambda_n^i \,: \,1\leq i\leq n \}$ be a set of independent Poisson processes, with the rate of $\Lambda_n^i$ equal to $2\rho_n^i$: whenever there is an incident on $\Lambda_n^i$, with probability $1/2$ the $i^{th}$ coordinate, $X_{n,i}$, is flipped to its opposite value. The unique equilibrium distribution of $\Xn$ is the uniform distribution on \hyp, $U_n$. 

Let $T_n^i$ be the time of the first incident on $\Lambda_n^i$. It is simple to show that $T_n^i$ is an optimal SST for $X_{n,i}$, with
\[ \prob{T_n^i>t} = e^{-2t\rho_n^i} \,. \]
Thus $T_n = \max T_n^i$ is an optimal SST for $\Xn$. (This is similar in flavour to the optimal SST for the continuous-time birth-death processes of \cite{Diaconis.Saloff-Coste-2006}: there the SST is given by a sum of exponential random variables of varying rates, rather than their maximum.)

It follows that $X_n$ satisfies the conditions of Theorem~\ref{thm:main-prelim}, with 
\[ \lambda_n^i = 2\rho_n^i\,, \quad\text{and}\quad g\equiv 0 \,. \]
Writing $\rho_n^* = \min\set{\rho_n^i}$, the sequence $X_n$ therefore exhibits a separation cutoff at time
\[ \tau_n= \max_{\rho\geq \rho_n^*}\set{\frac{\log(n\mu_n(0,2\rho])}{2\rho}} \]
if and only if $\tau_n\rho_n^*\rinf$. In this case, $\tau_n = 2\hat\tau_n$, where $\hat\tau_n$ is the total-variation cutoff time according to Theorem 12 of~\cite{Barrera.Lachaud.ea-2006}.



\medskip

For many simple examples, such as the symmetric random
walk for which all coordinates jump at rate 1, the result of Theorem~\ref{thm:coupling cutoff-window} gives an extremely conservative bound for the right-window. (Simple direct calculations show that a $(\log n/2,1)$-separation cutoff holds, whereas the bound on $b_n^R$ from Theorem~\ref{thm:coupling cutoff-window} tends to infinity with $n$.) However, the following example shows that the bound of Theorem~\ref{thm:coupling cutoff-window} can be achieved, and so
cannot be improved upon in general.

\begin{ex}\label{ex:odd-windows}
Consider the sequence of random walks on \hyp\ with $\rho_n^i = \max\set{1, 2\log_n(i)}$. The associated probability measure for $X_n$ is
\[ \mu_n = \frac{1}{n} \sum_{i=1}^n \delta_{\max\set{2,\,4\log_n(i)}} \,. \]
The measure $\mu_n$ places all its mass in the
interval $[2,4]$, with $\kappa_n=2$ and 
\[ \mu_n[2,\lambda] = \frac{\floor{n^{\lambda/4}}}{n} \sim
n^{\lambda/4 - 1}, \quad\text{for all $\lambda\in[2,4]$.}
\]
For this sequence,
\[ \tau_n = \max_{2\leq\lambda\leq 4} \set{\frac{\log\bra{n
\mu_n[2,\lambda]}}{\lambda}} = \max_{2\leq\lambda\leq 4}
\set{\frac{\log\bra{n^{\lambda/4}}}{\lambda}} = \frac{\log n}{4}
\,. \]
 Note that this maximum is attained at all $\lambda\in[2,4]$:
we arbitrarily take $\lambda_n^*=2$ to be the minimum of these values.
This gives $\beta_n = \sqrt{n}$, and hence $\nu_n[1,x] =
n^{(x-1)/2}$ for $x\in[1,2]$. Since $\tau_n\rinf$ as $n\rinf$, this
random walk exhibits a $\tau_n$-separation cutoff.
Now, by Lemma~\ref{lem:coupling cutoff-window}, the
left-window of this separation cutoff is bounded above by
$1/\lambda_n^* = 1/2$.
However, for fixed $c>0$ and
some sequence $b_n^R=o(\tau_n)$, integration by parts as in
equation~\eqref{eqn:int-by-parts} yields the following:
\begin{align*}
\theta_n\left(\frac{\log n}{4}+cb_n^R\right) &\sim \bra{e^{-4cb_n^R}- e^{-2cb_n^R}} \\
&\qquad  + \sqrt{n} \log(\sqrt{n}
\,e^{2cb_n^R}) \int_1^2 \left(\frac{e^{-2cb_n^R}}{\sqrt{n}}\right)^\lambda
n^{(\lambda-1)/2} \,d\lambda \\
&\sim e^{-2cb_n^R} \frac{\tau_n}{cb_n^R}\,.
\end{align*}
Arguing as in the proof of Theorem~\ref{thm:coupling cutoff-window}, a $(\tau_n,b_n^R)$-separation cutoff does not hold for any sequence $b_n^R =o(W(\tau_n))$ (see \cite{Connor-2007} for further details).
\end{ex}


\subsection{Links to extreme value theory}\label{ssec:EVT}

Looking back to the discussion following Proposition~\ref{prop:from_one_to_many}, where the separation distance is identified as the tail distribution of the maximum of a set of independent random variables $T_n^i$, it is reasonable to ask how the above results relate to the theory of extreme values. If the random variables $\{T_n^i \}$ are i.i.d. for all $i$ and $n$ then the Fisher-–Tippet-–Gnedenko Theorem guarantees convergence in distribution of a renormalized $T_n$ to one of three possible distributions. For example, if $\Xn$ is a random walk on \hyp\ for which the rate of each coordinate is chosen at random, with 
\[ \prob{\rho_n^i = p_k} = q_k\,, \qquad k=1,\dots,m, \]
for all $i$ and $n$, Theorem 2.7.2 of \cite{Galambos-1978} shows that a renormalized $\Tn$ has a limiting Gumbel distribution. Indeed, writing $p^* = \min\set{p_k}$, direct calculation 
shows that
\begin{align*}
\separation_n \left(\frac{\log n+c}{2p^*}\right) &= 1-\left(1-\sum_{j=1}^m q_k \squ{\frac{e^{-c}}{n}}^{p_k/p^*}\right)^n \\
&\sim 1- \left(1-q^* \frac{e^{-c}}{n}\right)^n \xrightarrow[n\rinf]{} 1- \exp(-q^*e^{-c}) \,.
\end{align*}
In this case we see that both right- and left-hand windows are $\Ord(1)$.

More generally, the function $\theta_n$ defined in
equation~\eqref{eqn:defn-theta} may be interpreted as
follows. Let $\{V_n^i \,:\, 1\leq i\leq n\}$ be independent, identically
distributed random variables, whose distribution is a mixture over $\lambda$ of
$\expdist(\lambda)$ distributions, with mixture probability
distribution $\mu_n$. Then, for $t\geq0$,
\[ \prob{V_n^i>t} = \int_0^\infty e^{-\lambda t} \mu_n(d\lambda) \,,
\]
and so
\[ \Ex{\sum_{i=1}^n \indev{V_n^i>t}} = n\int_0^\infty e^{-\lambda t}
\mu_n(d\lambda) = \theta_n(t) \,. \] Thus $\theta_n(t)$ describes the
mean number of exceedances of level $t$ by the set of random variables
$\{V_n^i\}$. In particular, Proposition~\ref{prop:theta-behaviour} implies that the set of $n$-tuples driven by
$\mu_n$ exhibits a $\tau_n$-separation cutoff if and only if
\[ \Ex{\sum_{i=1}^n \indev{V_n^i>c\tau_n}} \xrightarrow[n\rinf]{}
\begin{cases}
\infty &\quad 0<c<1 \\
0 &\quad c>1 \,.
\end{cases}
\]

\medskip
\section{Coupling cutoffs}\label{sec:coupling cutoff}

It is well known that the coupling method can be used to bound the rate of convergence to equilibrium of a Markov chain, via the coupling inequality (see \cite{Lindvall-2002}). 
Let $\Xn$ and $\Yn$ be two copies of a Markov process on $\En$ with equilibrium distribution $\pin$.
\begin{defn}\label{defn:coupling}
        A \emph{coupling} of $\Xn$ and $\Yn$ is a process $(\hat{X}_n,\hat{Y}_n)$ on $\En\times \En$ such that
            \[ \hat{X}_n\stackrel{\mathcal{D}}{=} \Xn \quad \text{and} \quad \hat{Y}_n\stackrel{\mathcal{D}}{=} \Yn \,, \]
where $\stackrel{\mathcal{D}}{=}$ denotes equality in distribution.

        The \emph{coupling time} $\Tn^c$ of $\hat{X}_n$ and $\hat{Y}_n$ is defined by
            \[ \Tn^c =\inf\set{t\geq 0\,:\, \hat{X}_n^t = \hat{Y}_n^t} \,.\]
    \end{defn}

For a given coupling of $\Xn$ and $\Yn$, define
\begin{equation}\label{eqn:defn-F-coupling cutoff}
\bar{F}_n(t) = \prob{\Tn^c > t}, \quad t\geq 0 \,,
\end{equation}
to be the tail probability of $\Tn^c$. Suppose now that $\Xn^0=\xn^0$ is fixed, and that $\Yn^0\sim \pin^0$. We then define the following behaviour, in analogy with Definition~\ref{defn:cutoff}:

\begin{defn}\label{defn:coupling cutoff}
For $n\geq 1$, let $\Tn^c$ and $\bar{F}_n$ be defined as above. We say that
the sequence $\set{\En,\Xn,\pin,\Tn^c}$ exhibits a \emph{$(\tau_n,b_n)$-coupling-cutoff} if $\tau_n,b_n>0$ satisfy
$b_n=o(\tau_n)$ and
\begin{align}
\bar{F}_-(c) = \liminf_{n\rinf} \bar{F}_n(\tau_n+cb_n) \quad&\text{satisfies
$\lim_{c\rminf}\bar{F}_-(c) = 1$} \,, \label{eqn:F-inf-conv}\\
\bar{F}_+(c) = \limsup_{n\rinf} \bar{F}_n(\tau_n+cb_n) \quad&\text{satisfies
$\lim_{c\rinf}\bar{F}_+(c) = 0$} \,. \label{eqn:F-sup-conv}
\end{align}
\end{defn}

 Thus a coupling cutoff occurs when the distance between the two processes, measured using the tail probability of the coupling time $\Tn^c$, asymptotically exhibits an abrupt change from one to zero at time $\tau_n$. (Note that if $\Tn^c$ is a maximal coupling time for all $n$ \cite{Griffeath-1975} then a coupling-cutoff is equivalent to a total-variation cutoff.) As with the optimal SST of Section~\ref{sec:sep-cutoff}, if $(\Xn,\Yn)$ is a pair of $n$-tuples  whose $i^{th}$ coordinates may be independently coupled at an exponential rate $\lambda_n^i$, then $\Tn^c$ is the maximum of $n$ coupling times and this yields an analogous version of Theorem~\ref{thm:main-prelim} for coupling cutoffs.

For the random walks on \hyp\ considered in Section~\ref{ssec:hypercube}, no intuitive maximal coupling is known in general; for the \emph{symmetric} random walk a (nearly) maximal solution is presented in~\cite{Matthews-1987}, and a stochastically optimal \emph{co-adapted} coupling is described in~\cite{Connor.Jacka-2008}. However, $\Xn$ and $\Yn$ may be simply coupled 
by allowing their $i^{th}$ coordinates to evolve independently until the time that they first agree, whereafter they move synchronously. If
$\Xn^0$ and $\Yn^0$ do not agree on the $i^{th}$ coordinate
(which happens with probability $1/2$), then it follows that the time taken for
agreement on this coordinate is equal to the time of the first
incident on a Poisson process of rate $2p_n^i$, and so this coupling takes place at an exponential rate. Thus a random walk on \hyp\ exhibits a coupling cutoff if and only if it exhibits a separation cutoff (with the same values of $\tau_n$ and $b_n$).


In general, the assumption that each component of the $n$-tuples may be co-adaptedly coupled at an exponential rate is not restrictive: indeed, this is a reasonable assumption for many Markov processes of interest \cite{Burdzy.Kendall-2000}. 
There is also a possibility that the coupling variant of Theorem~\ref{thm:main-prelim} outlined above could have interesting consequences for coupling-based perfect simulation algorithms (such as CFTP and variants) for high-dimensional distributions.


\section*{Acknowledgements}
Thanks to Persi Diaconis for making time to discuss this topic during a visit by the author to Universit\'e Nice Sophia Antipolis in June 2007, and to Pierre Del Moral for making this visit possible. Thanks also to Wilfrid Kendall for motivating and improving the work in this paper while the author was working at the University of Warwick, and to a helpful referee.


\bibliographystyle{amsplain}
\bibliography{hypercube2}

\end{document}